\documentclass[11pt,reqno]{amsart} % please use amsart at 11pt

\usepackage{amssymb,latexsym,color}
\usepackage{cite} % to get Refs. [1,2,3] typeset as [1--3] automatically
\usepackage{graphicx}
%\usepackage[height=190mm,width=130mm]{geometry} % this is the journal
                                % text area size

\theoremstyle{plain}
\newtheorem{theorem}{Theorem}[section]

\theoremstyle{definition}

\theoremstyle{remark}

 % an example of defining a custom
                                % mathematical function

\numberwithin{equation}{section} % to get equations numbered
                                % automatically according to section
                                % number as (1.1), (1.2), etc

\begin{document}
\title[On Star Coloring of Splitting Graphs]{On Star Coloring of Splitting Graphs} 

\author{Hanna Furma\'nczyk}
\address{Institute of Informatics \\ University of Gda\'nsk \\ Wita Stwosza 57 \\ 80-952 Gda\'nsk \\ Poland}
\email{hanna@inf.ug.edu.pl}

\author{Kowsalya.V}
\address{Part-Time Research Scholar (Category-B) \\ Research \& Development Centre \\ Bharathiar University \\ Coimbatore 641 046 \\ Tamilnadu\\ India}
\email{vkowsalya09@gmail.com}																

\author{Vernold Vivin.J}
\address{Department of Mathematics\\ University College of Engineering Nagercoil\\ (Anna University Constituent College)\\ Konam \\ Nagercoil - 629 004\\  Tamilnadu\\  India}
\email{vernoldvivin@yahoo.in}

\begin{abstract}
In this paper, we consider the problem of a star coloring. In general case the problems in NP-complete. 
We establish the star chromatic number for
splitting graph of complete and complete bipartite graphs, as well of paths and cycles. Our proofs are constructive, so they lead to appropriate star colorings of graphs under consideration.
\end{abstract}

\subjclass{05C15, 05C75}

\keywords{star coloring, splitting graph}

\maketitle

\section{Introduction}
\noindent
\par We consider only finite, undirected, loopless graphs without multiple edges.
\par The notion of star chromatic number was introduced by Branko Gr\"{u}nbaum in 1973. A \emph{star coloring} \cite{alberton, bg, fertin} of a graph $G$ is a proper vertex coloring in which 
every path on four vertices uses at least three distinct colors. Equivalently, in a star coloring, the induced subgraphs formed by the vertices of any two color classes has connected components 
that are star graphs. The {\emph{ star graph}} is a tree with at most one vertex with degree larger than $1$.
Star coloring is a strengthening of \emph{acyclic coloring} \cite{bg}, i.e. proper coloring in which every two color classes induce a forest.
The \emph{star chromatic number} $\chi_s\left(G\right)$ of $G$ is the least number of colors needed to star coloring of $G$.

Guillaume Fertin et al.\cite{fertin} gave the exact value of the star chromatic number of different families of graphs such as trees, cycles, complete bipartite graphs, outerplanar graphs, 
and $2$-dimensional grids. They also investigated and gave bounds for the star chromatic number of other families of graphs, such as planar graphs, hypercubes, $d$-dimensional grids 
$(d\geq3)$, $d$-dimensional tori $(d\geq2)$, graphs with bounded treewidth, and cubic graphs.

In this paper we consider star coloring of some splitting graphs. For a given 
graph $G$ the \emph{splitting graph} 
\cite{sam} $S(G)$ of graph $G$ is obtained by adding a new vertex $v'$ corresponding to each vertex $v$
of $G$ such that $N(v)=N(v')$, where $N(x)$ is the neighborhood of vertex $x$. 
For example, the splitting graph of $K_{2,3}$ is given in Fig. \ref{bip}.

Albertson et al.\cite{alberton} showed that it is NP-complete to determine whether $\chi_s\left(G\right)\leq 3$, even when $G$ is a graph that is both planar and bipartite. 
Coleman and Mor\'{e} \cite{col} proved that finding an optimal star 
coloring is NP-hard and remain so even for bipartite graphs. 

\begin{center}
\begin{figure}[htb]
\includegraphics[scale=0.4]{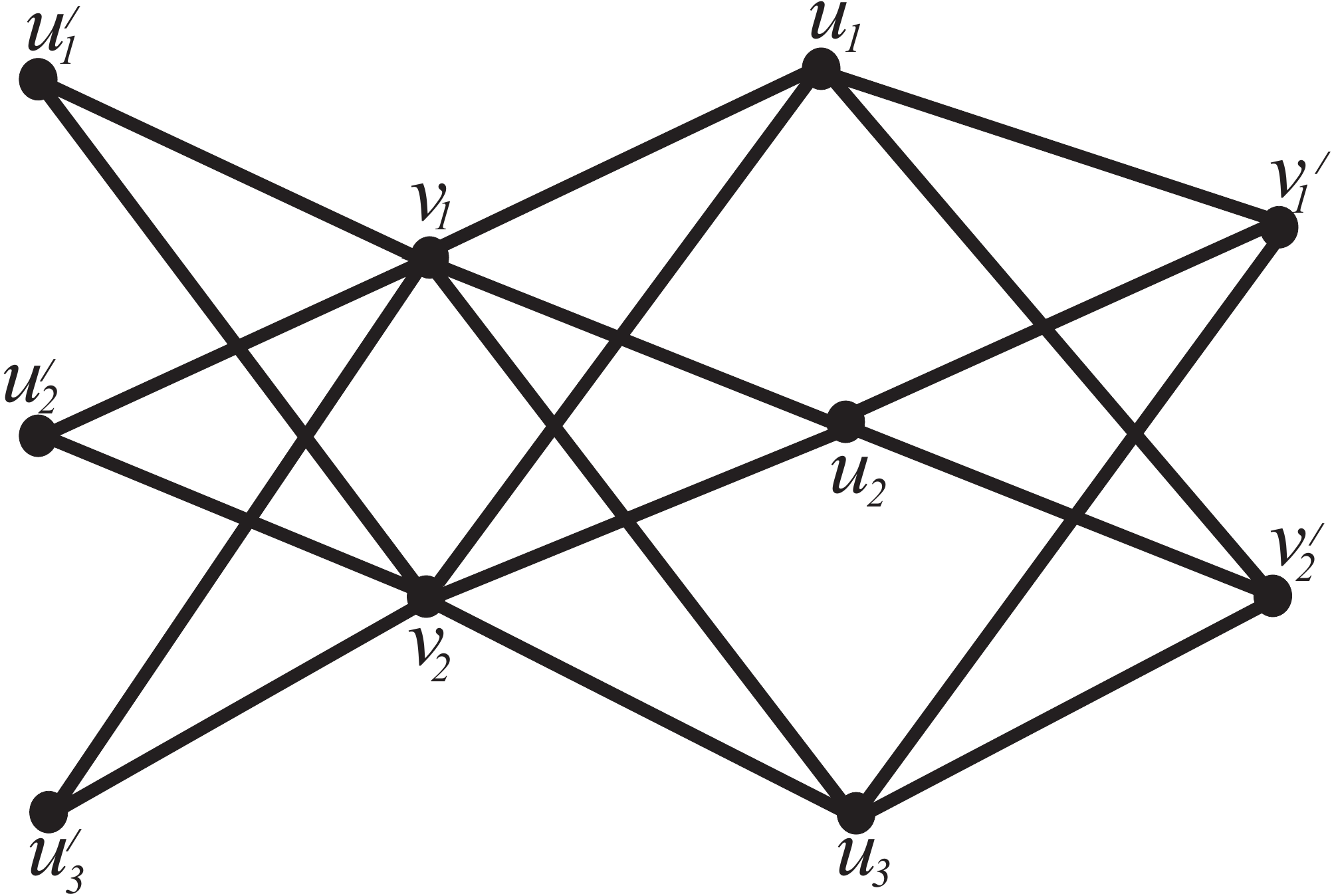}
\caption{$S(K_{2,3})$.}\label{bip}
\end{figure}
\end{center}

One can ask whether there is a subclass of planar graphs such that admits optimal star coloring in polynomial time. 
Sampathkumar and Walikar \cite{sam} posed an open problem: full characterization of graphs whose splitting graphs are planar. In this situation considering star coloring of splitting graphs 
seems to be desirable. Moreover, star coloring problem has application in combinatorial scientific computing.
In particular, it has been employed since 1980's to efficiently 
compute sparse Jacobian and Hessian matrices using either finite 
differences or automatic differentiation \cite{col}.
%\section{Preliminaries}

Additional graph theory terminology used in this paper can be found in \cite{bm, f}.

For the completness of the reasoning given in this paper, we recall some known results. 

\begin{theorem}[\cite{fertin}]\label{fertin1}
	If $C_n$ is a cycle on $n \geq 3$ vertices, then
	$$\chi_s(C_n)=\begin{cases}4\quad when\quad n=5\\
	3 \quad otherwise.
	\end{cases}$$
\end{theorem}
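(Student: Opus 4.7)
The plan is to establish the equality in both directions: first the lower bounds $\chi_s(C_n)\geq 3$ for every $n\geq 3$ together with the stronger $\chi_s(C_5)\geq 4$, and then matching star colorings realizing these bounds.

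For the lower bound $\chi_s(C_n)\geq 3$, I would split on the parity of $n$. If $n$ is odd then $\chi(C_n)=3$, and since a star coloring is in particular a proper coloring, three colors are already necessary. If $n$ is even the only proper $2$-coloring (up to swapping color names) is the alternating one, and then any four consecutive vertices form a $P_4$ using just two colors, violating the star condition. Hence $\chi_s(C_n)\geq 3$ for every $n\geq 3$.

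The delicate case is $\chi_s(C_5)\geq 4$. Assume towards contradiction that a proper $3$-coloring of $C_5$ exists. Since the independence number of $C_5$ equals $2$, no color class has size $\geq 3$, so the class sizes must be $(2,2,1)$. Let $v$ be the unique vertex of the singleton class; deleting $v$ leaves a path on four vertices whose proper $2$-coloring must alternate. But those four vertices form a $P_4$ inside $C_5$ using only two colors, contradicting the star condition. Together with the explicit $4$-coloring $(1,2,3,1,4)$ of $C_5$ this gives $\chi_s(C_5)=4$.

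For the upper bound when $n\neq 5$ I would construct explicit cyclic $3$-colorings according to $n\bmod 3$. When $n\equiv 0\pmod 3$, the periodic pattern $(1,2,3,1,2,3,\dots)$ assigns every window of four consecutive vertices colors of the form $(a,b,c,a)$, hence three distinct colors. When $n\equiv 1\pmod 3$ with $n\geq 4$, one small local modification, for instance ending the pattern with a block $(1,2,3,2)$ instead of $(1,2,3)$, adjusts the length by one while keeping every four-vertex window three-colored; the case $n=4$ collapses to the coloring $(1,2,3,2)$ directly. When $n\equiv 2\pmod 3$ with $n\geq 8$, a second such modification suffices, for example ending the cyclic pattern with two consecutive $(1,2,3,2)$-blocks, as in $(1,2,3,2,1,2,3,2)$ for $n=8$. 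In each construction, verification reduces to a finite check on the windows that overlap the patch and on those that wrap around.

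The main obstacle will be the upper-bound constructions for $n\equiv 1,2\pmod 3$: the naive period-three pattern $(1,2,3)$ creates a $P_4$ of shape $(a,b,a,b)$ at the wrap-around, so one must design a small patch that both maintains properness and avoids introducing any two-colored $P_4$, whether inside the patch, across its boundaries, or at the cyclic join. Isolating the exceptional case $n=5$, where the cycle is simply too short to accommodate any such patch, is exactly what forces the piecewise statement of the theorem.
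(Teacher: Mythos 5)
Your proof is correct, but note that the paper itself offers no proof of this statement to compare against: it is recalled as Theorem~\ref{fertin1} purely as a known result, cited to Fertin, Raspaud and Reed, and used later only as a black box (to get $\chi_s(S(C_n))\geq\chi_s(C_n)=3$ as a starting point). So you have supplied a self-contained argument where the paper supplies none. Checking your argument on its own merits: the lower bound $\chi_s(C_n)\geq 3$ via parity is fine (odd cycles need three colors already for properness; the unique proper $2$-coloring of an even cycle makes every four consecutive vertices a bicolored $P_4$); the $C_5$ argument via the independence number forcing class sizes $(2,2,1)$ and the deleted-vertex path being an alternating bicolored $P_4$ is clean and correct; and the constructions check out, since in a cycle the only $4$-vertex paths are windows of consecutive vertices, so it suffices to verify that the periodic $(1,2,3)$ pattern, patched with one block $(1,2,3,2)$ when $n\equiv 1\pmod 3$ and two such blocks when $n\equiv 2\pmod 3$ (with $n\geq 8$), never produces a two-colored window, including at the wrap-around. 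Your closing remark correctly identifies why $n=5$ is the unique exception: it is the only length in the residue classes requiring a patch that is too short to accommodate one.
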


%\begin{theorem}[\cite{fertin}] \label{fertin2}
%Let $K_{n,m}$ be a complete bipartite graph, $n, m \geq 1$. Then $\chi_s(K_{n,m})= min\left\{m,n\right\}+1$.
%\end{theorem}

In this paper we prove results concerning the star chromatic number of splitting graph of complete graphs, 
paths, complete bipartite graphs and  cycles.

\section{Main Results} 

\subsection{Star Coloring of Splitting graph of complete graphs}

\begin{theorem}
Let $K_n$ be a complete graph on $n\geq2$ vertices. Then $$\chi_s(S(K_n))=n+1.$$
\end{theorem}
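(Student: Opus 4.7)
The plan is to establish the two bounds $\chi_s(S(K_n))\ge n+1$ and $\chi_s(S(K_n))\le n+1$ separately, combining a rigidity argument on the inner clique with an explicit construction for the twin vertices.

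For the lower bound, since $K_n$ is a subgraph of $S(K_n)$, any proper coloring assigns $n$ distinct colors to $v_1,\dots,v_n$; after renaming, I may assume $v_i$ receives color $i$. Each twin $v_i'$ is adjacent to every $v_j$ with $j\ne i$, so in an $n$-coloring $v_i'$ is forced to receive color $i$ as well. But then the $4$-vertex path $v_1'\,v_2\,v_1\,v_2'$, whose edges $v_1'v_2$, $v_2v_1$, $v_1v_2'$ are all present by the definition of the splitting graph, is colored $1,2,1,2$, so it uses only two colors. This contradicts the star condition, giving $\chi_s(S(K_n))\ge n+1$.

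For the upper bound, I propose the coloring $c(v_i)=i$ for $1\le i\le n$ and $c(v_i')=n+1$ for $1\le i\le n$. It is proper because each twin $v_i'$ only sees neighbors $v_j$ colored with values in $\{1,\dots,n\}\setminus\{i\}$, while the clique $K_n$ is rainbow-colored. To verify the star condition, the key observation is that the twins $\{v_1',\dots,v_n'\}$ form an independent set, so every $P_4$ of $S(K_n)$ contains at most two twins, and when it contains two they must occupy non-adjacent positions of the path (necessarily $\{1,3\}$, $\{1,4\}$, or $\{2,4\}$). A short case split then shows: $P_4$s with at most one twin use four pairwise distinct colors; $P_4$s with exactly two twins use the color $n+1$ twice together with two distinct colors from $\{1,\dots,n\}$ on the remaining two vertices, for a total of three colors.

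The only delicate step I anticipate is making the $P_4$ enumeration airtight, in particular ruling out any configuration that yields a bichromatic four-path. The independence of the twin class collapses the analysis to a handful of positional patterns, so I expect this to be a careful but routine check rather than a genuine obstacle.
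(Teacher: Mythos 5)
Your proposal is correct and follows essentially the same route as the paper: the identical coloring ($v_i\mapsto i$, $v_i'\mapsto n+1$) for the upper bound, and the same lower-bound rigidity argument forcing $\sigma(v_i')=\sigma(v_i)$ in any hypothetical $n$-coloring and exhibiting the bicolored path $v_1'v_2v_1v_2'$. Your explicit $P_4$ enumeration for the star condition is actually more complete than the paper, which merely asserts that the coloring works.
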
 

\begin{proof}
Let $V(S(K_n))=\left\{v_i:1\leq i \leq n \right\} \cup \left\{v_i^\prime:1 \leq i \leq n \right\}.$ 
We define star $(n+1)$-coloring $\sigma$ of $S(K_n)$ in the following way:
$$\sigma(v_i)=c_i:1\leq i\leq n; \sigma(v^\prime_i)=c_{n+1}:1\leq i\leq n.$$
Clearly the vertices 
of complete graph $K_n$ needs $n$ colors for a proper coloring and hence for star coloring. Thus, 
$\chi_s(S(K_n))\geq n$. In the further part we will show that $n$ colors are insufficient for star coloring of $S(K_n)$.

By definition of splitting graph, for $1\leq j\leq n$, the vertex $v_j$ is adjacent to all $v_i$ except for 
$i=j$. Hence $\sigma (v_j)\neq \sigma(v_i^{\prime})$ for $i \neq j$. If  $\sigma(v_i^\prime)=\sigma(v_i)$, 
then there exist bicolored paths $v_i^\prime v_{i+1}v_iv_{i+1}^\prime$ for $1\leq i \leq n$. 
A contradiction to proper star coloring. Thus, $\chi_s(S(K_n))=n+1$. 
\end{proof}

\subsection{Star coloring of splitting graph of paths} 

\begin{theorem}
Let $P_n$ be a path on $n\geq 4$ vertices. Then $$\chi_s(S(P_n))=4.$$
\end{theorem}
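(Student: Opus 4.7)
The plan is to prove the two inequalities separately.

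For the upper bound I construct the explicit star 4-coloring $\sigma$ defined by
\[
\sigma(v_i) = ((i-1) \bmod 3) + 1, \qquad \sigma(v_i') = 4 \qquad (1 \le i \le n).
\]
Propriety is immediate: the set $\{v_1', \dots, v_n'\}$ is independent in $S(P_n)$ (every neighbor of $v_i'$ lies in $N(v_i) \subseteq V(P_n)$), and consecutive path vertices carry distinct residues modulo $3$. For the star property, suppose toward a contradiction that some $P_4$ in $S(P_n)$ is bichromatic. Since all primed vertices carry color $4$, the two colors used must be $\{4, c_0\}$ for some $c_0 \in \{1, 2, 3\}$, and every unprimed vertex of the $P_4$ is coloured $c_0$. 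I split on the number $k$ of primed vertices of the $P_4$. For $k = 0$ the $P_4$ lies in $P_n$, and any four consecutive path-vertices realise all three residues modulo $3$. For $k = 1$ the three unprimed vertices sit at path-indices confined to an interval of length at most $3$; a short check of the possible positions of the primed vertex inside the $P_4$ shows their three residues modulo $3$ never all coincide. For $k = 2$ the two primed vertices must be non-adjacent in the $P_4$ (primed vertices are pairwise non-adjacent in $S(P_n)$): the endpoint-endpoint placement forces the middle edge to be monochromatic, violating propriety, and the other two non-adjacent placements leave two unprimed vertices at distance $2$ along $P_n$, whose indices differ by $2$ modulo $3$ and hence receive different colors.

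For the lower bound I use the observation that for every $n \ge 4$ the subgraph of $S(P_n)$ induced on $\{v_1, v_2, v_3, v_4, v_1', v_2', v_3', v_4'\}$ is isomorphic to $S(P_4)$, so it suffices to prove $\chi_s(S(P_4)) \ge 4$. Assume toward contradiction that a star 3-coloring $\sigma$ of $S(P_4)$ exists, and normalize $\sigma(v_2) = 1$; the four neighbors $v_1, v_3, v_1', v_3'$ of $v_2$ then all lie in $\{2, 3\}$. I split on whether $\sigma(v_1) = \sigma(v_3)$. If yes, say both equal $2$, the $P_4$ $v_2' v_1 v_2 v_3$ forces $\sigma(v_2') = 3$, the $P_4$ $v_1 v_2 v_3 v_4$ forces $\sigma(v_4) = 3$, and the adjacencies of $v_3'$ to $v_2$ and $v_4$ then force $\sigma(v_3') = 2$; the $P_4$ $v_2' v_3 v_4 v_3'$ now reads $3, 2, 3, 2$, a bichromatic contradiction. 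If $\sigma(v_1) \ne \sigma(v_3)$, say $\sigma(v_1) = 2$ and $\sigma(v_3) = 3$, then $\sigma(v_2') = 1$ is forced; $v_1' v_2 v_3 v_2'$ forces $\sigma(v_1') = 2$ and symmetrically $v_3' v_2 v_1 v_2'$ forces $\sigma(v_3') = 3$, so $v_2' v_1 v_2 v_1'$ reads $1, 2, 1, 2$, again bichromatic.

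The main obstacle is running the lower-bound forcing chain without missing a sub-case. The clean dichotomy $\sigma(v_1) = \sigma(v_3)$ versus $\sigma(v_1) \ne \sigma(v_3)$ keeps the analysis short, but one must choose exactly the right 4-cycles and $P_4$s---in particular $v_1 v_2 v_3 v_2'$, $v_1 v_2 v_3 v_4$, and the bridge path $v_2' v_1 v_2 v_1'$---to close every branch.
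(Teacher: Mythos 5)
Your proof is correct and follows essentially the same route as the paper: the identical explicit $4$-coloring (path vertices cyclically by $i \bmod 3$, all primed vertices in a fourth color class) for the upper bound, and a forcing argument ruling out a star $3$-coloring for the lower bound. The only organizational difference is that you localize the lower bound to the induced copy of $S(P_4)$ and check it exhaustively, whereas the paper argues directly on a window of four consecutive vertices of $P_n$; your write-up is the more detailed of the two, particularly in verifying that the $4$-coloring admits no bichromatic $P_4$.
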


\begin{proof}
Let $V(S(P_n))=\left\{v_1,v_2,\ldots,v_n, v_1', \ldots, v_n'\right\}$.  
	
It is clear that vertices of $P_n$ need three colors for proper star coloring. Thus $\chi_s(S(P_n))\geq 3$.

Now, we will show that three colors are insufficient for star coloring of $S(P_n)$, $n\geq 4$. We start from 
any star 3-coloring of $P_n$. we will denote it by the coloring $c$. Let $v_i, v_{i+1}, v_{i+2}, v_{i+3}$, $1 \leq i \leq n-3$, be any four consecutive 
vertices of $P_n$. It is clear that there exists at least one pair of vertices of length two among these four ones with different colors assigned. 
Without lost of generality we may assume that $c(v_i) \neq c(v_{i+2})$. Then color that may be used to color vertex $v_{i+1}^{\prime}$ is determined to 
$c(v_{i+1})$. Let us try to assign appropriate color to $v_{i+2}^{\prime}$. We have two possibilities - we can use color $c(v_{i})$ or $c(v_{i+2})$. But any 
of these two choices leads to 2-chromatic $P_4$. Thus $\chi_s(S(P_n))\geq 4$.

The star $4$-coloring $\sigma$ of $S(P_n)$ is defined as follows:
	$$\sigma(v_i)= \begin{cases} 
	c_1 \;\; \mbox{if} \;\; i\equiv 1 \bmod 3 \\ 
	c_2 \;\; \mbox{if} \;\; i\equiv 2 \bmod 3 \\ 
	c_3\;\; \mbox{if} \;\;  i\equiv 0 \bmod 3  
	\end{cases}$$
and 	
	$$\sigma(v^\prime_i)=c_4$$
for $1\leq i\leq n$.
It is easy to verify that any two color classes in coloring $\sigma$ induce a forest whose components are $K_{1,2}$ and $K_2$, 
hence by definition $\sigma$ is a proper star 4-coloring. Hence, $\chi_s(S(P_n))=4$.
\end{proof}

It is easy to verify that $\chi_s(S(P_2))=\chi_s(S(P_3))=3$.

\subsection{Star coloring of splitting graph of complete bipartite graphs}

\begin{theorem}
Let $K_{m,n}$, $m \leq n$ be complete bipartite graph. Then $$\chi_s(S(K_{m,n}))=2m+1.$$
\end{theorem}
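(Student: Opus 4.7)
The plan is to establish $\chi_s(S(K_{m,n}))=2m+1$ by matching upper and lower bounds. I first observe that $S(K_{m,n})$ is bipartite with parts $A\cup A'$ and $B\cup B'$, where $A=\{u_1,\dots,u_m\}$, $A'=\{u_1',\dots,u_m'\}$, $B=\{w_1,\dots,w_n\}$, $B'=\{w_1',\dots,w_n'\}$; the edges are exactly $u_iw_j$, $u_iw_j'$, and $u_i'w_j$ for all $i,j$, so in particular $u_i'$ and $w_j'$ are non-adjacent.

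For the upper bound I would exhibit the coloring $\sigma(u_i)=c_i$, $\sigma(u_i')=c_{m+i}$ for $1\le i\le m$, and $\sigma(w_j)=\sigma(w_j')=c_{2m+1}$ for $1\le j\le n$. Properness is immediate since the $2m$ colors used on $A\cup A'$ all differ from $c_{2m+1}$. By the bipartiteness above, every $P_4$ has the form $a$-$b$-$c$-$d$ with $a,c\in A\cup A'$ and $b,d\in B\cup B'$; then $\sigma(b)=\sigma(d)=c_{2m+1}$ while $\sigma(a)$ and $\sigma(c)$ are two distinct colors in $\{c_1,\dots,c_{2m}\}$ (all $2m$ colors assigned on $A\cup A'$ are different), giving exactly three colors on the path.

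For the lower bound I rely on the elementary observation that in any star coloring, if $\sigma(x)=\sigma(y)$ then $N(x)\cap N(y)$ must be rainbow-colored, since any repetition $\sigma(z)=\sigma(z')$ for $z,z'\in N(x)\cap N(y)$ produces the bichromatic $P_4$: $z$-$x$-$z'$-$y$. Suppose $\sigma$ is a star coloring of $S(K_{m,n})$ with $k$ colors. If every two vertices of $A\cup A'$ receive distinct colors, then $|\sigma(A\cup A')|=2m$, which is disjoint from $\sigma(B)$ (by full adjacency), forcing $k\ge 2m+1$. Otherwise some pair in $A\cup A'$ shares a color, and I would split into three subcases. (i) $\sigma(u_i)=\sigma(u_j)$ with $i\ne j$: their common neighborhood $B\cup B'$ is rainbow, hence $k\ge|\sigma(A)|+2n\ge 2n+1\ge 2m+1$. (ii) $\sigma(u_i)=\sigma(u_j')$: the common neighborhood is $B$, so $|\sigma(B)|=n$; moreover the path $w_p'$-$u_i$-$w_q$-$u_j'$ (legitimate because $u_i$ is adjacent to $B'$ but $u_j'$ is not) forces $\sigma(w_p')\ne\sigma(w_q)$, so $\sigma(B)$ and $\sigma(B')$ are color-disjoint. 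Assuming also not (i), $|\sigma(A)|=m$, and the disjoint sets $\sigma(A)$, $\sigma(B)$, $\sigma(B')$ yield $k\ge m+n+|\sigma(B')|\ge m+n+1\ge 2m+1$. (iii) $\sigma(u_i')=\sigma(u_j')$ with $i\ne j$: only $B$ is immediately forced rainbow; assuming we are in neither (i) nor (ii), the sets $\sigma(A)$, $\sigma(A')$, $\sigma(B)$ are pairwise disjoint with sizes at least $m$, $1$, $n$, so $k\ge m+n+1\ge 2m+1$.

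The main obstacle is subcase (ii): the immediate consequence only produces $|\sigma(B)|=n$, and when $n=m$ this leaves an apparent $k=m+n=2m$ scenario. Spotting the $P_4$ that runs through $B'$ on the $u_i$-side and $B$ on the $u_j'$-side (exploiting the asymmetry $u_i'\not\sim w_j'$ inherent in the splitting graph) is what forces $\sigma(B)\cap\sigma(B')=\emptyset$ and closes the gap. After that, the remaining inequalities are bookkeeping with the disjoint color classes $\sigma(A)$, $\sigma(A')$, $\sigma(B)$, $\sigma(B')$ dictated by the adjacencies.
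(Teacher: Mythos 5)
Your proof is correct, and your extremal coloring for the upper bound is exactly the one given in the paper: a rainbow assignment of $2m$ colors to $A\cup A'$ (the paper's $X\cup X'$) and a single further color on all of $B\cup B'$. The lower bound in both arguments rests on the same key principle --- two vertices sharing a color force their common neighbourhood to be rainbow --- but you organize the case analysis differently and more rigorously. The paper phrases everything in terms of which of the four sets $X,X',Y,Y'$ must be ``rainbow'' and then discusses informally how each configuration ``may be extended'' to a coloring with so many colors, which blurs the line between upper- and lower-bound reasoning; in particular the critical configuration in which $X$ and $X'$ are identified colorwise (your subcase (ii), which for $n=m$ threatens a $2m$-coloring) is dispatched with the one-line remark that $Y$ must also be rainbow ``otherwise 2-chromatic $P_4$ arises,'' without exhibiting the path. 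You supply exactly that missing witness, the path $w_p'$-$u_i$-$w_q$-$u_j'$ exploiting the non-adjacency of $A'$ and $B'$ in the splitting graph, and you track the pairwise disjointness of $\sigma(A)$, $\sigma(A')$, $\sigma(B)$, $\sigma(B')$ explicitly in every branch until the bound $k\geq 2m+1$ falls out. The net effect is that your write-up is a tightened, fully justified version of the paper's argument rather than a genuinely different one; the only cosmetic slack is that in verifying the upper bound you name just one of the two symmetric orientations of a $P_4$ across the bipartition, which is harmless.
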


\begin{proof}
Let $V(K_{m,n})=X \cup Y=\left\{v_1,v_2,\ldots,v_m\right\}\cup\left\{u_1,u_2,\ldots,u_n\right\}$. Then we have 
$V(S(K_{m,n}))=\left\{X,X',Y,Y'\right\}= \left\{v_1,v_2,\ldots, v_m ;v_1^\prime,v_2^\prime,\ldots, 
v_m^\prime; u_1,u_2,\ldots,\right.$ $\left. u_n; u_1^\prime,u_2^\prime,\ldots, u_n^\prime \right\}$.
	
Note that in any star $k$-coloring of complete graph $K_{m,n}$, $k < m+n$,
there is only one bipartite partition set, let say $A$, including at least two vertices from the same color class. Vertices in the second bipartite 
partition set, 
let say $B$, must be colored with different $|B|$ colors and these colors cannot be assigned to vertices in set $A$. We will named such multicolored 
bipartite partition set $B$ as \emph{rainbow}.

This implies that at least two out of four partition sets $X,X',Y,Y'$ of $S(K_{m,n})$ must be rainbow:  $X,Y$, $X,X'$, $Y,Y'$, or $X',Y'$. 
\begin{description}
\item[Case 1] $X$ and $Y$ are rainbow.
\\Note that such coloring of $X$ and $Y$ uses different $(m+n)$ colors and it may be extended to proper star $(m+n+1)$-coloring of $S(K_{m,n})$.

\item[Case 2] $X$ and $X'$ are rainbow.
\\Note that such coloring of $X$ and $X'$ may use $m$ colors, but then it cannot be extended to any star $k$-coloring of $S(K_{m,n})$ with $k < m+n+1$. 
We may extend it only to star $k$-coloring with $k \geq m+n+1$. Indeed, $Y$ must be also raibow in this case, otherwise 2-chromatic $P_4$ arises.
\\If coloring of $X$ and $X'$ uses $2m$ colors, then it may be extended to star $(2m+1)$-coloring. Notice that $2m+1 \leq m+n+1$ for $m \leq n$.

\item[Case 3] $Y$ and $Y'$ are rainbow.
\\This case may be considered analogously to Case 2. We have two possibilities: star $(n+m+1)$- or $(2n+1)$-coloring of $S(K_{m,n})$.

\item[Case 4] $X'$ and $Y'$ are rainbow.
\\In this cases at least one out of $X$ and $Y$ must be also rainbow partition set and we may look for optimal star coloring of $S(K_{m,n})$ due to 
Case 2 or Case 3.
\end{description}

Summarizing, star coloring of $S(K_{m,n})$ with the smallest number of colors is mentioned in Case 2. In details, we define star $(2m+1)$-coloring 
$\sigma$ of $S(K_{m,n})$ in the following way.

$$\sigma(v_i)=c_i, 1\leq i\leq m$$
$$\sigma(u_j)=c_{m+1}, 1\leq j\leq n$$
$$\sigma(u^\prime_j)=c_{m+1}, 1\leq j\leq n$$ 
$$\sigma(v_i^\prime)=c_{m+1+i}, 1\leq i\leq m.$$
\end{proof}

\subsection{Star coloring of splitting graph of cycles}

\begin{theorem}
Let $C_n$ be a cycle graph on $n\geq 3$ vertices. Then $$\chi_s(S(C_n))
\begin{cases}
= 4 \;\; \mbox{if} \;\; n \not \equiv 1 \bmod 3 \text{ and } n \neq 5\\
\leq 5 \;\; \mbox{otherwise}
\end{cases}$$
\end{theorem}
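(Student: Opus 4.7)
The plan is to show the lower bound $\chi_s(S(C_n))\geq 4$ uniformly for every $n\geq 3$, and to match it by explicit constructions in the three regimes distinguished by the theorem.

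For the lower bound I treat three cases. When $n=3$ we have $S(C_3)=S(K_3)$, so the complete-graph theorem proved above gives $\chi_s(S(C_3))=4$. When $n=5$, Theorem~\ref{fertin1} forces $\chi_s(C_5)=4$, and since $C_5$ is an induced subgraph of $S(C_5)$ we obtain $\chi_s(S(C_5))\geq 4$. For every remaining $n$, I adapt the path proof: a hypothetical star $3$-coloring $\sigma$ of $S(C_n)$ restricts to a star $3$-coloring of the cycle (which exists by Theorem~\ref{fertin1} since $n\neq 5$), so there must be four consecutive cycle vertices $v_i, v_{i+1}, v_{i+2}, v_{i+3}$ with $\sigma(v_i)\neq \sigma(v_{i+2})$---otherwise $C_n$ would be $2$-colored, which is not a star coloring. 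Then $\sigma(v_{i+1}')$ is forced to $\sigma(v_{i+1})$ by its two neighbours $v_i,v_{i+2}$, and each admissible color for $\sigma(v_{i+2}')$ produces an induced $P_4$ with only two colors, contradicting the star property.

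For $\chi_s(S(C_n))\leq 4$ when $n\equiv 0\pmod 3$, I propose the period-$3$ cycle coloring $\sigma(v_i)=c_{((i-1)\bmod 3)+1}$ together with $\sigma(v_i')=c_4$ on every shadow. A short case split on edge-patterns of a candidate $2$-chromatic $P_4$ in $S(C_n)$ shows that the only shape not immediately ruled out by ``all shadows share the single color $c_4$'' is $v_i'\,v_{i+1}\,v_{i+2}'\,v_{i+3}$, which would require $\sigma(v_{i+1})=\sigma(v_{i+3})$---impossible under a period-$3$ coloring. For $n=3k+2$ with $k\geq 2$, I keep the same backbone but perturb the last two cycle vertices, e.g.\ setting $\sigma(v_{3k+1})=c_4$ and $\sigma(v_{3k+2})=c_3$, so that exactly one wrap-around distance-$2$ pair of cycle vertices becomes monochromatic; the shadows at positions $3k$, $3k+1$, $3k+2$ (and possibly $1$) are then re-assigned from their remaining legal palettes so that the residual bad $P_4$s disappear.

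For $\chi_s(S(C_n))\leq 5$ in the exceptional cases $n=5$ and $n\equiv 1\pmod 3$, I use a fifth color to patch the defect. For $n=5$, the rainbow cycle coloring $\sigma(v_i)=c_i$ combined with the shift-$2$ shadow rule $\sigma(v_i')=c_{((i+1)\bmod 5)+1}$ uses five colors and avoids the pattern $ABAB$ on any $P_4$, since the index gap between any two same-colored positions would have to be a multiple of $5$. For $n=3k+1$, an analogous patched period-$3$ scheme places $c_5$ at a single patch vertex (and on the two neighbouring shadows if necessary) to close the cycle up without creating a bad $P_4$. The main obstacle will be the $n\equiv 2\pmod 3$ construction: because the period-$3$ coloring of $C_n$ cannot close cleanly, both cycle and shadow colors must be coordinated in a small window around the defect, and one must verify by a finite local case-analysis that every $P_4$ crossing the wrap-around still sees at least three colors.
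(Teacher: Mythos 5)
Your lower bound is sound and essentially the paper's argument: reducing $n=3$ to $S(K_3)$, using $\chi_s(C_5)=4$ for $n=5$, and otherwise locating four consecutive cycle vertices with $\sigma(v_i)\neq\sigma(v_{i+2})$, forcing $\sigma(v_{i+1}')=\sigma(v_{i+1})$, and killing every choice for $v_{i+2}'$ with a bicolored $P_4$. Your verification of the $n\equiv 0\pmod 3$ construction (periodic backbone, all shadows $c_4$, the only dangerous shape being $v_i'\,v_{i+1}\,v_{i+2}'\,v_{i+3}$) also matches the paper's Case~1.

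The genuine gap is the case $n\equiv 2\pmod 3$, which is precisely where the theorem claims equality with $4$ and where the paper does most of its work. Your proposed perturbation cannot be completed, no matter how the shadows are reassigned. With the backbone you describe we have $\sigma(v_{3k-1})=c_2$, $\sigma(v_{3k})=c_3$, $\sigma(v_{3k+1})=c_4$, $\sigma(v_{3k+2})=c_3$, $\sigma(v_1)=c_1$. The shadow $v_{3k+1}'$ is adjacent to $v_{3k}$ and $v_{3k+2}$, both colored $c_3$, so $c_3$ is excluded by properness; and each remaining color $x\in\{c_1,c_2,c_4\}$ already appears on a cycle neighbour of $v_{3k}$ or $v_{3k+2}$, giving the bicolored pattern $x,c_3,x,c_3$ on the path $v_{3k-1}v_{3k}v_{3k+1}'v_{3k+2}$ (for $x=c_2$), $v_{3k+1}v_{3k}v_{3k+1}'v_{3k+2}$ (for $x=c_4$), or $v_1 v_{3k+2}v_{3k+1}'v_{3k}$ (for $x=c_1$). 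The obstruction is structural: your cycle coloring uses all four colors on the five vertices $v_{3k-1},\dots,v_1$ surrounding the monochromatic distance-two pair, so the shadow of the middle vertex has an empty palette. (Your list of shadows needing repair is also too short: keeping $\sigma(v_{3k-1}')=c_4$ gives the bicolored path $v_{3k-1}'v_{3k}v_{3k+1}v_{3k+2}$ with colors $c_4c_3c_4c_3$.) The paper sidesteps this by never placing $c_4$ on the cycle: it hard-codes a special eight-vertex block (its Fig.~\ref{p8}) in which some shadows reuse $c_3$ instead of $c_4$, and only then continues periodically; you would need a construction of that kind, together with the finite case analysis you explicitly defer. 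The patched five-colorings for $n=5$ and $n\equiv 1\pmod 3$ are plausible but likewise unverified, though there the extra color leaves enough slack that this is a lesser concern.
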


\begin{proof}
Let $V(C_n)=\left\{v_1,v_2,\ldots, v_n\right\}$ and $V(S(C_n))=\left\{v_1,v_2,\ldots v_n,v_1^\prime,v_2^\prime,\right.$ $\left. \ldots,
v_n^\prime\right\}$. By Theorem \ref{fertin1}, $\chi_s(C_n)=3$ 
and thus $\chi_s(S(C_n))\geq 3$, for $n\geq 6$. We claim that 3 colors are insufficient for star coloring 
of $S(C_n)$. First, we assign colors $\{c_1,c_2,c_3\}$ to vertices of $C_n$ to obtain proper star 3-coloring $c$ of $C_n$. If we 
want to extend this $c$ coloring into whole $S(C_n)$ without adding a new color, we may have at most three possibilities. 
Let $v_i, v_{i+1}, v_{i+2}, v_{i+3}$
be any four consecutive vertices of $C_n$, $1\leq i \leq n-3$. Since the cycle is star 3-colored, then exactly two out of four vertices
$v_i, v_{i+1}, v_{i+2}, v_{i+3}$ are assigned the same color:
\begin{enumerate}
\item $c(v_i)=c(v_{i+3})$
\\Then the coloring of $v_{i+1}^\prime$ and $v_{i+2}^\prime$ is determined. Vertices $v_{i+1}^\prime$ and $v_{i+2}^\prime$ must obtain the same 
colors as $v_{i+1}$ and $v_{i+2}$, respectively. But then 2-colored $P_4$ arises: $v_{i+2}^\prime, v_{i+1}, v_{i+2}, v_{i+1}^\prime$.
We need fourth color.
\item $c(v_i)=c(v_{i+2})$
\\It is clear that $c(v_{i+1})\neq c(v_{i+3})$ and the coloring of $v_{i+2}^\prime$ is determined while vertex $v_{i+1}^\prime$ can be assigned
two colors: $c(v_{i+1})$ or $c(v_{i+3})$. If we assign color $c(v_{i+1})$ to vertex $v_{i+1}^\prime$, then we get 2-chromatic 
$P_4$: $v_i,v_{i+1}^\prime, v_{i+2}, v_{i+1}$. In the second choice, also 2-chroma-tic $P_4$ arises: $v_i, v_{i+1}^\prime, v_{i+2}, v_{i+3}$. Also in
this case, we get additional color.
\item $c(v_{i+1})=c(v_{i+3})$
\\This case is analogous to case with $c(v_i)=c(v_{i+2})$.
\end{enumerate}
Summarizing, $\chi_s(S(C_n)\geq 4$. We will consider three cases depending on the value of $n \bmod 3$, $n\geq 3$.
\begin{description}
\item[Case 1] $n \equiv 0 \bmod 3$.
\\We define $4$-coloring $\sigma$ of $S(C_n)$ in the following way.
	
	$$\sigma(v_i)= \begin{cases} 
	c_1 \;\; \mbox{if} \;\; i\equiv 1 \bmod 3 \\ 
	c_2 \;\; \mbox{if} \;\; i\equiv 2 \bmod 3 \\ 
	c_3 \;\; \mbox{if} \;\; i\equiv 0 \bmod 3
	\end{cases}$$
and $\sigma(v^\prime_i)=c_4, 1\leq i\leq n$. 
Consider the color classes $c_i$ and $c_j$, $(1\leq i < j \leq 4)$. The components of induced subgraph of these color classes are $K_2$ and $K_{1,2}$. Hence there exists no bicolored path on four vertices and thus $\sigma$ is a proper star 4-coloring. Hence $\chi_s(S(C_n))=4$.\\

\item[Case 2] $n \equiv 1 \bmod 3$
\\For $n\geq 4$, we define $5$-coloring $\sigma$ of $S(C_n)$ in the following way.

	$$\sigma(v_i)= 
	\begin{cases} 
	c_1 \;\; \mbox{if} \;\; i\equiv 1 \bmod 3 \\ 
	c_2 \;\; \mbox{if} \;\; i\equiv 2 \bmod 3 \\ 
	c_3 \;\; \mbox{if} \;\; i\equiv 0 \bmod 3 
	\end{cases}$$
for $1 \leq i \leq n-1$, and $\sigma(v_n)=c_2$, 
	
	$$\sigma(v^\prime_i)= 
	\begin{cases} 
	c_4 \;\; \mbox{if} \;\; i\equiv 0 \bmod 3 \\ 
	c_5 \;\; \mbox{otherwise}
	\end{cases}$$
for $1\leq i\leq n$.

\par It is clear that every two out of three color classes corresponding to colors $c_1, c_2$, and $c_3$ avoid 2-chromatic $P_4$, 
similarly as
color classes corresponding to colors $c_4$ and $c_5$. We have to only check pairs of color classes $c_k;1 \leq k \leq 3$ and $c_4$ or $c_5$.
It is easy to check that every such two color classes induces forest whose components are isolated vertices, $P_2$ or $K_{1,2}$. Thus, $\sigma$ is proper star 5-coloring. Hence, $4 \leq \chi_s(S(C_n)) \leq 5$.

\item[Case 3] $n=5$
\\Star 5-coloring of $S(C_5)$ is given in Fig.~\ref{c8}b).

\item[Case 4] $n \equiv 2 \bmod 3$ and $n \geq 8$
\begin{description}
\item[Case 4.1] $n=8$
\\Star 4-coloring of $S(C_8)$ is given in Fig.~\ref{c8}c).

\begin{center}
\begin{figure}[htb]
\includegraphics[scale=0.3]{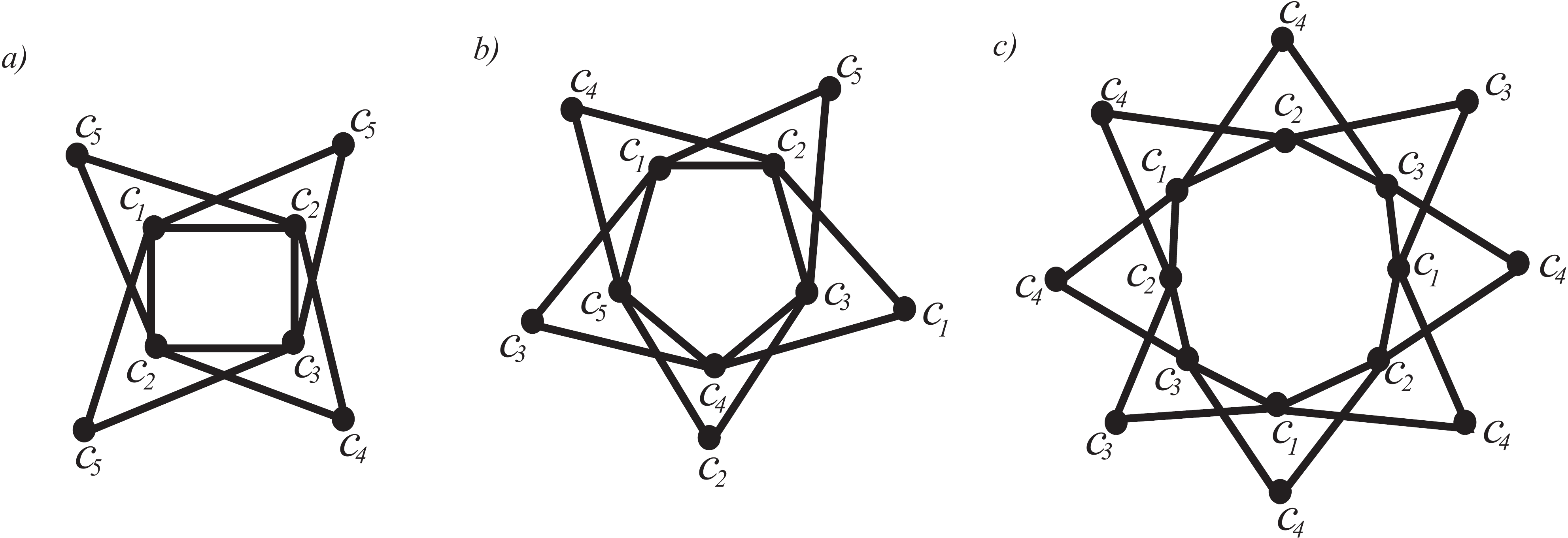}
\caption{a)$S(C_4)$; b) $S(C_5)$ and c)$S(C_8)$ with their star coloring.}\label{c8}
\end{figure}
\end{center}

\item[Case 4.2] $n \geq 11$
\\Let $n=8+3t$, $t \geq 1$. First, we color $v_1, \ldots v_8$ in the way given in Fig.~\ref{p8}. 

\begin{center}
\begin{figure}[htb]
\includegraphics[scale=0.3]{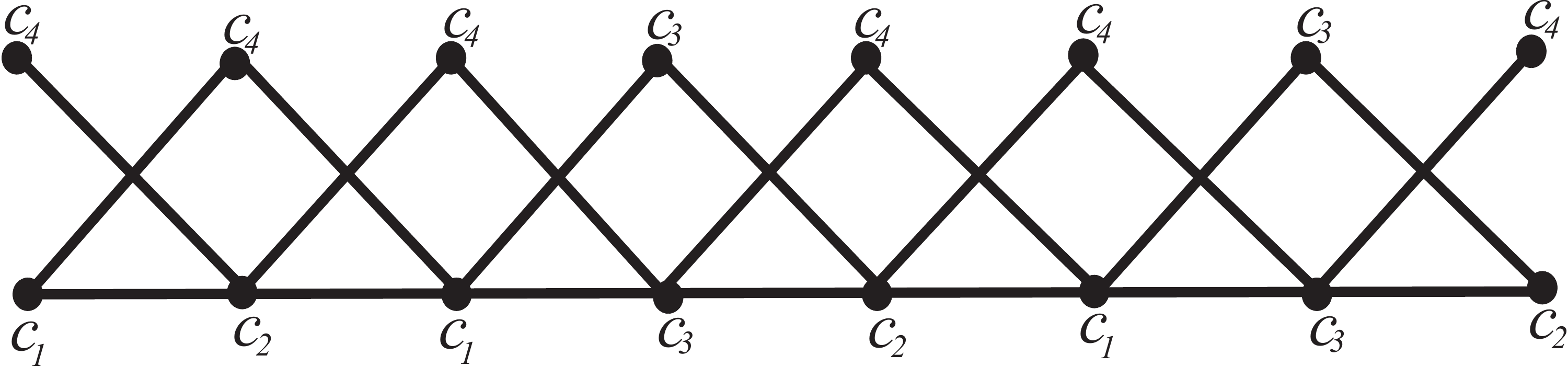}
\caption{A part of $S(C_n)$, $n \geq 11$, with its star 4-coloring.}\label{p8}
\end{figure}
\end{center}
Next, the remaining vertices of a cycle of $S(C_n)$ are colored in the following way.
	$$\sigma(v_i)= 
	\begin{cases} 
	c_1 \;\; \mbox{if} \;\; i\equiv 0 \bmod 3 \\ 
	c_2 \;\; \mbox{if} \;\; i\equiv 1 \bmod 3 \\ 
	c_3 \;\; \mbox{if} \;\; i\equiv 2 \bmod 3  
	\end{cases}$$
for $9\leq i\leq n$, and 
$$\sigma(v_i^\prime)= 
	\begin{cases} 
	c_3 \;\; \mbox{if} \;\; c(v_i)=3 \\ 
	c_4 \;\; \mbox{otherwise.} 
	\end{cases}$$

Similarly as it was in Case 1 we can check that $\sigma$ is proper star 4-coloring. Hence, $\chi_s(S(C_n))=4$.
\end{description}
\end{description}
\end{proof}

\section{Conclusion}
In the paper the problem of a star coloring for some splitting graphs has been considered. As an open question we put the problem 
of determining exact values of star chromatic number of splitting graphs of cycles $C_n$ where $n=5$ or $n \equiv 1 \bmod 3$. Moreover,
considering star coloring of degree splitting graphs, defined by Ponraj and Somasundaram \cite{ponraj}, seems to be worth paying attention. 
	
	\bigskip

\end{document}